\renewcommand{\baselinestretch}{1.15}
\newcommand{\msn}[1]{MR:\,\href{http://www.ams.org/mathscinet-getitem?mr=MR#1}{#1}}
\newcommand{\MSN}[2]{MR:\,\href{http://www.ams.org/mathscinet-getitem?mr=MR#1}{#1}}
\newcommand{\doi}[1]{doi:\,\href{http://dx.doi.org/#1}{#1}}
\newcommand{\cs}[1]{CiteSeer:\,\href{http://citeseerx.ist.psu.edu/viewdoc/summary?doi=#1}{#1}}
\newtheorem*{WLHC}{Weak List Hadwiger Conjecture}
\newtheorem*{LHC}{List Hadwiger Conjecture}
\newtheorem*{HC}{Hadwiger Conjecture}
\newtheorem*{WHC}{Weak Hadwiger Conjecture}
\theoremstyle{plain}
\newtheorem{theorem}{Theorem}
\newtheorem{lemma}[theorem]{Lemma}
\DeclareMathOperator{\col}{col}
\newcommand{\floor}[1]{\ensuremath{\protect\lfloor#1\rfloor}}
\begin{document}

\title{\bf Disproof of the List Hadwiger Conjecture\thanks{MSC: graph
    minors 05C83, graph coloring 05C15}}

\author{J\'anos Bar\'at\footnote{School of Mathematical Sciences,
    Monash University, VIC 3800, Australia
    (\texttt{janos.barat@monash.edu}). Research supported by OTKA
    Grant PD~75837, and the J\'anos Bolyai Research Scholarship of the
    Hungarian Academy of Sciences. } 
\and Gwena\"el  Joret\footnote{D\'epartement d'Informatique, Universit\'e Libre de
    Bruxelles, Brussels, Belgium
    (\texttt{gjoret@ulb.ac.be}). Postdoctoral Researcher of the Fonds
    National de la Recherche Scientifique (F.R.S.--FNRS). Supported in
    part by the Actions de Recherche Concert\'ees (ARC) fund of the
    Communaut\'e fran\c{c}aise de Belgique. Also supported by an
    Endeavour Fellowship from the Australian Government.}  \and
  David~R.~Wood\footnote{Department of Mathematics and Statistics, The
    University of Melbourne, Melbourne, Australia
    (\texttt{woodd@unimelb.edu.au}). Supported by a QEII Research
    Fellowship from the Australian Research Council.}}

\maketitle

\begin{abstract}
  The List Hadwiger Conjecture asserts that every $K_t$-minor-free
  graph is $t$-choosable. We disprove this conjecture by constructing
  a $K_{3t+2}$-minor-free graph that is not $4t$-choosable for every
  integer $t\geq 1$.
\end{abstract}

\section{Introduction}
\label{Intro}

In 1943, \citet{Hadwiger43} made the following conjecture, which is
widely considered to be one of the most important open problems in
graph theory; see \citep{Toft-HadwigerSurvey96} for a
survey\footnote{See \citep{Diestel4} for undefined graph-theoretic
  terminology. Let $[a,b]:=\{a,a+1,\dots,b\}$.}.

\begin{HC}
  Every $K_t$-minor-free graph is $(t-1)$-colourable.
\end{HC}

The Hadwiger Conjecture holds for $t\leq 6$ (see
\citep{Hadwiger43,Dirac52,Wagner37,RSST97,RST-Comb93}) and is open for
$t\geq 7$.  In fact, the following more general conjecture is open.

\begin{WHC}
  Every $K_t$-minor-free graph is $ct$-colourable for some constant
  $c\geq 1$.
\end{WHC}

It is natural to consider analogous conjectures for list
colourings\footnote{A \emph{list-assignment} of a graph $G$ is a
  function $L$ that assigns to each vertex $v$ of $G$ a set $L(v)$ of
  colours. $G$ is \emph{$L$-colourable} if there is a colouring of $G$
  such that the colour assigned to each vertex $v$ is in $L(v)$. $G$
  is \emph{$k$-choosable} if $G$ is $L$-colourable for every
  list-assignment $L$ with $|L(v)|\geq k$ for each vertex $v$ of
  $G$. The \emph{choice number} of $G$ is the minimum integer $k$ such
  that $G$ is $k$-choosable. If $G$ is $k$-choosable then $G$ is also
  $k$-colourable---just use the same set of $k$ colours for each
  vertex. Thus the choice number of $G$ is at least the chromatic
  number of $G$. See \citep{Woodall-ListColouringSurvey} for a survey
  on list colouring.}. First, consider the choosability of planar
graphs.  \citet{ERT80} conjectured that some planar graph is not
4-choosable, and that every planar graph is 5-choosable. The first
conjecture was verified by \citet{Voigt-DM93} and the second by
\citet{Thomassen-JCTB94}.  Incidentally, \citet{Boro93} asked whether
every $K_t$-minor-free graph is $(t-1)$-choosable, which is true for
$t\leq 4$ but false for $t=5$ by Voigt's example. The following
natural conjecture arises (see
\citep{KawaMohar-JCTB07,Wood-Contractibility}).

\begin{LHC} 
  Every $K_t$-minor-free graph is $t$-choosable.
\end{LHC}

The List Hadwiger Conjecture holds for $t\leq 5$ (see
\citep{Skrekovski-DM98,HMS-DM08,WL10}). Again the following more
general conjecture is open.

\begin{WLHC} 
  Every $K_t$-minor-free graph is $ct$-choosable for some constant
  $c\geq 1$.
\end{WLHC}

In this paper we disprove the List Hadwiger Conjecture for $t\geq 8$,
and prove that $c\geq\frac{4}{3}$ in the Weak List Hadwiger
Conjecture.

\begin{theorem}
  \label{thm:Main}
  For every integer $t\geq 1$,\\[-5ex]
  \begin{enumerate}
  \item[\textup{(a)}] there is a $K_{3t+2}$-minor-free graph that is
    not $4t$-choosable.\\[-5ex]
  \item[\textup{(b)}] there is a $K_{3t+1}$-minor-free graph that is
    not $(4t-2)$-choosable,\\[-5ex]
  \item[\textup{(c)}] there is a $K_{3t}$-minor-free graph that is not
    $(4t-3)$-choosable.
  \end{enumerate}
\end{theorem}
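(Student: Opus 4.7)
The plan is to construct, for each $t \geq 1$, an explicit graph and bad list assignment witnessing each of (a), (b), (c). I will focus on (a) as the main case; for the base $t = 1$, Voigt's planar non-$4$-choosable graph settles (a), while $K_3$ and $K_2$ settle (b) and (c) respectively.

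For (a) at general $t$, the template is to take $t$ disjoint copies $V_1, \ldots, V_t$ of Voigt's graph $V$ (equipped with a bad $4$-list $L^*$) and join them to an apex clique $K \cong K_{3t-3}$. Call the result $G_t$. Each $V_i$ has Hadwiger number at most $4$, so the disjoint union has Hadwiger number $4$, and joining with $K_{3t-3}$ gives total Hadwiger number $(3t-3)+4 = 3t+1$; thus $G_t$ is $K_{3t+2}$-minor-free.

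The crux is the bad $4t$-list assignment. Partition $[4t]$ into palettes $C_1, \ldots, C_t$ of size $4$, transport $L^*$ onto $V_i$ via $C_i$ (call this $L^*_i$), and set $L(v) := L^*_i(v) \cup \bigcup_{j \neq i} C_j$ of size $4t$ for $v \in V_i$. The apex lists are designed so that, in any valid colouring $\phi$, the $3t-3$ apex colours are forced to occupy three colours of each of $t-1$ palettes; this reduces the effective list $L(v)\setminus \phi(K)$ on each vertex of some $V_i$ to exactly $L^*_i(v)$, yielding a contradiction with the non-$4$-choosability of $V$. A natural realization uses lists $L(u) = C_i \cup (\textup{padding})$ for three apex vertices per palette $i \in [t-1]$, where the paddings share auxiliary ``blocker'' colours to prevent apex vertices escaping to padding.

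The main obstacle is proving that this palette-hitting behaviour is forced in \emph{every} apex colouring, not merely the canonical one; ruling out escapes into padding colours requires a careful combinatorial design exploiting the interlocking structure of the padding together with properties of $L^*$. Parts (b) and (c) should follow by deleting one or two apex vertices from $G_t$: each deletion lowers the Hadwiger number by exactly $1$, and a refined pigeonhole (retuning the palettes and paddings) should reduce the forced non-choosability by $2$ and $3$ respectively, matching the bounds $4t-2$ and $4t-3$.
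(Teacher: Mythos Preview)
Your construction cannot work for $t \geq 2$: in fact the graph $G_t$ you build is $4t$-choosable. Given any $4t$-list-assignment $L$, first colour the apex clique $K \cong K_{3t-3}$ greedily from $L$; this uses exactly $3t-3$ colours. For every $v \in V_i$ the reduced list $L(v) \setminus \phi(K)$ then has size at least $4t - (3t-3) = t + 3 \geq 5$. Since each $V_i$ is planar, Thomassen's theorem yields a colouring of $V_i$ from these reduced lists, and as the $V_i$ are pairwise non-adjacent these colourings combine to a proper $L$-colouring of $G_t$. No design of paddings or blockers on the apex can get around this, because the obstruction is purely arithmetic: $3t-3$ apex vertices can remove at most $3t-3$ colours from any list, never the $4(t-1)$ colours of $\bigcup_{j\neq i}C_j$ that you need gone. (There is a second, more elementary problem: by the Four Colour Theorem, Voigt's bad $4$-list-assignment necessarily uses more than four colours in total, so it cannot be ``transported onto $V_i$ via a palette $C_i$ of size $4$'' at all.)

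The paper's construction is quite different and does not go through Voigt's example. The base graph is the cocktail-party graph $H = K_{r\times 2}$, i.e.\ $K_{2r}$ minus a perfect matching $\{v_iw_i : 1\leq i\leq r\}$, whose Hadwiger number is only $\lfloor 3r/2\rfloor$; this $\tfrac32$ gap between $|V(H)|/2$ and the Hadwiger number is exactly what produces the $\tfrac43$ ratio in the theorem. With $r=2t+1$ and $q=4t$ one assigns lists from a universe of $q+1$ colours to the $q+2$ vertices of $H$, so by pigeonhole some matched pair $v_i,w_i$ must share a colour; the list of $w_i$ is chosen to exclude a prescribed colour $c_i$, so not every $v_i$ can receive $c_i$. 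Pasting one copy of $H$ for each of the $q^r$ colour vectors $(c_1,\dots,c_r)$ onto the common clique $\{v_1,\dots,v_r\}$ then rules out every possible colouring of that clique.
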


Before proving Theorem~\ref{thm:Main}, note that adding a dominant
vertex to a graph does not necessarily increase the choice number (as
it does for the chromatic number). For example, $K_{3,3}$ is
3-choosable but not 2-choosable. Adding one dominant vertex to
$K_{3,3}$ gives $K_{1,3,3}$, which again is 3-choosable
\citep{Ohba04}. In fact, this property holds for infinitely many
complete bipartite graphs \citep{Ohba04}; also see \citep{Shen08}.

\section{Proof of Theorem~\ref{thm:Main}}

Let $G_1$ and $G_2$ be graphs, and let $S_i$ be a $k$-clique in each
$G_i$.  Let $G$ be a graph obtained from the disjoint union of $G_1$
and $G_2$ by pairing the vertices in $S_1$ and $S_2$ and identifying
each pair.  Then $G$ is said to be obtained by \emph{pasting} $G_1$
and $G_2$ on $S_1$ and $S_2$.
The following lemma is well known.

\begin{lemma}
  \label{Pasting}
  Let $G_1$ and $G_2$ be $K_t$-minor-free graphs.
  Let $S_i$ be a $k$-clique in each $G_i$.  Let $G$ be a pasting of
  $G_1$ and $G_2$ on $S_1$ and $S_2$.  Then
  $G$ is $K_t$-minor-free.
\end{lemma}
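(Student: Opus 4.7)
The plan is to argue by contradiction: assume $G$ contains a $K_t$ minor with branch sets $B_1,\dots,B_t$, and construct a $K_t$ minor in $G_1$ or $G_2$, which will contradict the hypothesis on $G_i$. The key structural fact is that $S := V(G_1)\cap V(G_2)$ separates $V(G_1)\setminus S$ from $V(G_2)\setminus S$ in $G$, so any branch set $B_i$ meeting both sides must also meet $S$. Combined with $S$ being a clique in each $G_j$, this will let us ``shortcut'' through $S$.

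I would first split into two cases according to where the branch sets live. \textbf{Case A:} no $B_i$ is contained in $V(G_2)\setminus S$, so each $B_i$ meets $V(G_1)$. \textbf{Case B:} some $B_i$ lies entirely in $V(G_2)\setminus S$. In Case B I would immediately observe that any other branch set $B_j\subseteq V(G_1)\setminus S$ would force an edge of $G$ between $V(G_1)\setminus S$ and $V(G_2)\setminus S$, which does not exist; hence in Case B every $B_j$ meets $V(G_2)$ and the situation is symmetric to Case A with $G_1$ and $G_2$ swapped. So it suffices to handle Case A and conclude a $K_t$ minor in $G_1$.

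In Case A I would set $B_i':=B_i\cap V(G_1)$ and prove two claims. \emph{Claim 1:} $B_i'$ is connected in $G_1$. For any two vertices $u,v\in B_i'$, take a $u$--$v$ path inside $G[B_i]$ and decompose it into maximal subpaths alternately contained in $V(G_1)$ and in $V(G_2)$; every $V(G_2)$-subpath has both endpoints in $B_i\cap S\subseteq B_i'$, and since $S$ is a clique in $G_1$, we replace each such subpath by the corresponding edge in $G_1[B_i']$, yielding a $u$--$v$ walk in $G_1[B_i']$. \emph{Claim 2:} for $i\ne j$, $B_i'$ and $B_j'$ are adjacent in $G_1$. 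Pick any edge $xy$ of $G$ with $x\in B_i$, $y\in B_j$; if $xy\in E(G_1)$ we are done, otherwise $xy\in E(G_2)$, and we replace each endpoint lying in $V(G_2)\setminus S$ by an arbitrary vertex of $B_i\cap S$ or $B_j\cap S$ (which is nonempty because the corresponding branch set then straddles $S$), obtaining an edge of the clique $S$ inside $G_1$ between $B_i'$ and $B_j'$.

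I expect the only subtle step is Claim 2 in the subcase where both $x$ and $y$ lie in $V(G_2)\setminus S$: one must verify that both $B_i$ and $B_j$ then intersect $S$. This is exactly where the separator observation from the first paragraph is used, together with the Case A assumption that both $B_i$ and $B_j$ meet $V(G_1)$ (so they genuinely straddle and hence meet $S$). Once Claims 1 and 2 are in place, $\{B_1',\dots,B_t'\}$ is a $K_t$ minor in $G_1$, contradicting $K_t$-minor-freeness of $G_1$ and completing the proof.
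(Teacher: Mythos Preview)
Your proof is correct and follows essentially the same approach as the paper's: after a symmetry argument ensure every branch set meets $V(G_1)$, restrict each branch set to $V(G_1)$, and use that $S$ is a clique in $G_1$ to recover connectivity and adjacency, yielding a $K_t$ minor in $G_1$. Your write-up is in fact more careful than the paper's terse version---in particular, you explicitly verify adjacency of the restricted branch sets (your Claim~2), which the paper leaves implicit.
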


\begin{proof}
  Suppose on the contrary that $K_{t+1}$ is a minor of $G$. Let
  $X_1,\dots,X_{t+1}$ be the corresponding branch sets.  If some $X_i$
  does not intersect $G_1$ and some $X_j$ does not intersect $G_2$,
  then no edge joins $X_i$ and $X_j$, which is a contradiction. Thus,
  without loss of generality, each $X_i$ intersects $G_1$. Let
  $X'_i:=G_1[X_i]$. Since $S_1$ is a clique, $X'_i$ is connected. Thus
  $X'_1,\dots,X'_{t+1}$ are the branch sets of a $K_{t+1}$-minor in
  $G_1$.  This contradiction proves that $G$ is
  $K_t$-minor-free. 
\end{proof}

Let $K_{r\times 2}$ be the complete $r$-partite graph with $r$ colour
classes of size $2$. Let $K_{1,r\times 2}$ be the complete
$(r+1)$-partite graph with $r$ colour classes of size $2$ and one
colour class of size $1$. That is, $K_{r\times2}$ and $K_{1,r\times2}$
are respectively obtained from $K_{2r}$ and $K_{2r+1}$ by deleting a
matching of $r$ edges. The following lemma will be useful.

\begin{lemma}[\citep{Ivanco88,Wood-GC07}]
  \label{MinusMatching}
  $K_{r\times 2}$ is $K_{\floor{3r/2}+1}$-minor-free, and
  $K_{1,r\times 2}$ is $K_{\floor{3r/2}+2}$-minor-free.
\end{lemma}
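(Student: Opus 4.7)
My plan is to analyze the structure of possible branch sets in $K_{r\times 2}$ directly, then deduce the claim about $K_{1,r\times 2}$ from the first claim by a standard universal-vertex reduction. Write $P_1,\dots,P_r$ for the $r$ pairs of non-adjacent vertices in $K_{r\times 2}$; recall that two vertices are adjacent iff they lie in different $P_i$'s. Two simple structural observations follow at once from this adjacency rule: (i) a set $S\subseteq V(K_{r\times 2})$ with $|S|\ge 2$ induces a connected subgraph iff $S$ meets at least two of the $P_i$'s; and (ii) if $S,T$ are disjoint connected subsets, then \emph{no} edge of $K_{r\times 2}$ joins $S$ to $T$ iff $|S|=|T|=1$ and $S\cup T=P_i$ for some $i$.

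Now suppose that $X_1,\dots,X_s$ are the branch sets of a $K_s$ minor in $K_{r\times 2}$. Let $s_1$ be the number of singleton $X_i$ and set $s_2:=s-s_1$. By (ii), no two of the $s_1$ singleton branch sets can come from the same pair, so $s_1\le r$. By (i), each of the remaining $s_2$ branch sets has size at least $2$, and since all the $X_i$'s are disjoint,
\[
s_1 + 2s_2 \;\le\; |V(K_{r\times 2})| \;=\; 2r.
\]
Combining, $s = s_1+s_2\le r + s_1/2 \le 3r/2$; since $s$ is an integer, $s\le\floor{3r/2}$, which proves the first assertion.

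For $K_{1,r\times 2}$, I would invoke the following standard observation: if a graph $G$ is $K_t$-minor-free and $G^+$ is obtained from $G$ by adding a universal vertex $v$, then $G^+$ is $K_{t+1}$-minor-free. Indeed, given any $K_{t+1}$-minor of $G^+$, delete the branch set containing $v$; the remaining $t$ branch sets avoid $v$ and are therefore connected in $G$ (none of their internal edges can use $v$), and any edge used between two of them in $G^+$ automatically lies in $G$. Applying this to $G=K_{r\times 2}$ and $G^+=K_{1,r\times 2}$ yields the second assertion from the first.

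The core step is the counting argument in the second paragraph; both structural facts are one-line checks and the universal-vertex reduction is routine. The only mild subtlety is integrality: the chain $s\le 3r/2$ must be rounded down to $s\le\floor{3r/2}$, which is the sharp bound when $r$ is odd (and the tight case has $s_1=r$ singletons, one from each pair, plus $\floor{r/2}$ size-$2$ branch sets formed from the other $r$ vertices).
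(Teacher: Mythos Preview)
Your argument is correct. The paper itself does not prove this lemma; it is quoted as a known result from \citep{Ivanco88,Wood-GC07}, so there is no in-paper proof to compare against. Your self-contained argument---counting singleton versus non-singleton branch sets, using that any two singleton branch sets must lie in different pairs and that non-singleton branch sets occupy at least two vertices---cleanly gives $s\le r+s_1/2\le 3r/2$, hence $s\le\floor{3r/2}$. The universal-vertex reduction you use for $K_{1,r\times 2}$ is standard and valid. One cosmetic point: the appeal to (i) in the counting step is superfluous (non-singleton already means size $\ge 2$); (i) is only needed implicitly, to ensure that observation (ii) really covers all cases of disjoint \emph{connected} sets with no edge between them.
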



\begin{proof}[Proof of Theorem~\ref{thm:Main}]
  Our goal is to construct a $K_p$-minor-free graph and a
  non-achievable list assignment with $q$ colours per vertex, where
  the integers $p$, $q$ and $r$ and a graph $H$ are defined in the
  following table.  Let $\{v_1w_1,\dots,v_rw_r\}$ be the deleted
  matching in $H$. By Lemma~\ref{MinusMatching}, the calculation in
  the table shows that $H$ is $K_p$-minor-free.

  \begin{center}
    \begin{tabular}{c|ccccl}
      \hline
      case& $p$ &$q$ & $r$ & $H$ \\\hline
      (a)& $3t+2$& $4t$& $2t+1$ & $K_{r\times2}$ & $\floor{\frac{3}{2}r}+1=3t+2=p$\\
      (b)& $3t+1$& $4t-2$& $2t$ & $K_{r\times2}$ & $\floor{\frac{3}{2}r}+1=3t+1=p$\\
      (c)& $3t$     & $4t-3$& $2t-1$ & $K_{1,r\times2}$ & $\floor{\frac{3}{2}r}+2=3t=p$\\\hline
    \end{tabular}
  \end{center}
  \bigskip

  For each vector $(c_1,\dots,c_r)\in[1,q]^r$, let $H(c_1,\dots,c_r)$
  be a copy of $H$ with the following list assignment. For each
  $i\in[1,r]$, let $L(w_i):=[1,q+1]\setminus\{c_i\}$. Let
  $L(u):=[1,q]$ for each remaining vertex $u$.  There are $q+1$
  colours in total, and $|V(H)|=q+2$.  Thus in every $L$-colouring of
  $H$, two non-adjacent vertices receive the same colour. That is,
  $\col(v_i)=\col(w_i)$ for some $i\in[1,r]$. Since each $c_i\not\in
  L(w_i)$, it is not the case that each vertex $v_i$ is coloured
  $c_i$.

  Let $G$ be the graph obtained by pasting all the graphs
  $H(c_1,\dots,c_r)$, where $(c_1,\dots,c_r)\in[1,q]^r$, on the clique
  $\{v_1,\dots,v_r\}$. The list assignment $L$ is well defined for $G$
  since $L(v_i)=[1,q]$. By Lemma~\ref{Pasting}, $G$ is
  $K_p$-minor-free. Suppose that $G$ is $L$-colourable.  Let $c_i$ be
  the colour assigned to each vertex $v_i$. Thus $c_i\in
  L(v_i)=[1,q]$. Hence, as proved above, the copy $H(c_1,\dots,c_r)$
  is not $L$-colourable. This contradiction proves that $G$ is not
  $L$-colourable. Each vertex of $G$ has a list of $q$ colours in
  $L$. Therefore $G$ is not $q$-choosable. (It is easily seen that $G$
  is $q$-degenerate\footnote{A graph is $d$-degenerate if every
    subgraph has minimum degree at most $d$. Clearly every
    $d$-degenerate graph is $(d+1)$-choosable.}, implying $G$ is
  $(q+1)$-choosable.)\
\end{proof}

Note that this proof was inspired by the construction of a
non-4-choosable planar graph by \citet{Mirz96}.

\section{Conclusion}

Theorem~\ref{thm:Main} disproves the List Hadwiger
Conjecture. However, list colourings remain a viable approach for
attacking Hadwiger's Conjecture. Indeed, list colourings provide
potential routes around some of the known obstacles, such as large
minimum degree, and lack of exact structure theorems; see
\citep{WL10,KawaMohar-JCTB07,Wood-Contractibility,KawaReed-STOC09}.

The following table gives the best known lower and upper bounds on the
maximum choice number of $K_t$-minor-free graphs. Each lower bound is
a special case of Theorem~\ref{thm:Main}. Each upper bound (except
$t=5$) follows from the following degeneracy results.  Every
$K_3$-minor-free graph (that is, every forest) is 1-degenerate.
\citet{Dirac64} proved that every $K_4$-minor-free graph is
2-degenerate. \citet{Mader68} proved that for $t\leq 7$, every
$K_t$-minor-free graph is $(2t-5)$-degenerate. \citet{Jorg94} and
\citet{ST06} proved the same result for $t=8$ and $t=9$
respectively. \citet{Song04} proved that every $K_{10}$-minor-free
graph is 21-degenerate, and that every $K_{11}$-minor-free graph is
25-degenerate. In general, \citet{Kostochka82,Kostochka84} and
\citet{Thomason84,Thomason01} independently proved that every
$K_t$-minor-free graph is $\mathcal{O}(t\sqrt{\log t})$-degnerate.

\begin{center}
  \begin{tabular}{c|ccccccccccccccc}
    \hline
    $t$               & 3 & 4 & 5 & 6 & 7   & 8 & 9 & 10 & 11& $\cdots$ &$t$\\
    \hline
    lower bound & 2 & 3 & 5 & 6 & 7   & 9 & 10 & 11 & 13 & $\cdots$ &$\frac{4}{3}t-c$\\
    upper bound &2  & 3 & 5 & 8 & 10 & 12 & 14 & 22 & 26 &  $\cdots$ &  $\mathcal{O}(t\sqrt{\log t})$\\
    \hline
  \end{tabular}
\end{center}
\bigskip

The following immediate open problems arise:
\begin{itemize}
\item Is every $K_6$-minor-free graph $7$-choosable?
\item Is every $K_6$-minor-free graph $6$-degenerate?
\item Is every $K_6$-minor-free graph $6$-choosable?
\end{itemize}

\subsection*{Acknowledgements}

Thanks to Louis Esperet for stimulating discussions.

\renewcommand{\baselinestretch}{1}

\def\soft#1{\leavevmode\setbox0=\hbox{h}\dimen7=\ht0\advance \dimen7
  by-1ex\relax\if t#1\relax\rlap{\raise.6\dimen7
    \hbox{\kern.3ex\char'47}}#1\relax\else\if T#1\relax
  \rlap{\raise.5\dimen7\hbox{\kern1.3ex\char'47}}#1\relax \else\if
  d#1\relax\rlap{\raise.5\dimen7\hbox{\kern.9ex
      \char'47}}#1\relax\else\if D#1\relax\rlap{\raise.5\dimen7
    \hbox{\kern1.4ex\char'47}}#1\relax\else\if l#1\relax
  \rlap{\raise.5\dimen7\hbox{\kern.4ex\char'47}}#1\relax \else\if
  L#1\relax\rlap{\raise.5\dimen7\hbox{\kern.7ex
      \char'47}}#1\relax\else\message{accent \string\soft \space #1
    not defined!}#1\relax\fi\fi\fi\fi\fi\fi}

\end{document}